\def\congruent{\equiv}
\def\ratQ{ \mathbb{Q} }  
\def\notdiv{\nmid}
\numberwithin{equation}{section}
\newtheorem{theorem}{Theorem}
\newtheorem{lemma}{Lemma}
\newtheorem*{TheoremA}{Theorem A}
\newtheorem*{Conjecture}{Conjecture}
\def\German{\mathfrak}
\def\lcm{{\rm lcm}}
\def\conj{\overline}
\def\notmid{\nmid}
\begin{document}
	
\title{ Number of solutions to $a^x + b^y = c^z$, A Shorter Version  }

\author{Reese Scott}
\address{86 Boston St., Somerville, MA 02143}

\author{Robert Styer}

\address{Department of Mathematics and Statistics\\
Villanova University \\
800 Lancaster Avenue\\
Villanova, PA  19085--1699 \\
U.S. A.
}

\email{robert.styer@villanova.edu}

{\urladdr{http://www.homepage.villanova.edu/robert.styer/ReeseScott/indexReese.htm} }

\subjclass{ 11D61} 

\submitted{ February 5, 2015, rev 5 July 2023} 

\keywords{ exponential Diophantine equation }

%  18 Dec 2014    21 dec 22 dec    29 dec    2 Jan 2015    13 Jan   15 jan   30 jan   4 feb  

%  revised 4 Feb 2015  
%  submitted to Debrecen  5 feb 2015  
%  rev 25 Aug 2015  28 Aug  29 Aug   30 aug  1 sept 
%  Debrecen ALTERNATE new revised version 1 sept 2015 
% final version 19 sept 2015  23 sept 2015   25 Sept 2015 
% revised 29 May 2020 to make it shorter   10 July 2020     24 June 2023 small changes  25 jun 2023   26 jun  5 july 

\begin{abstract} 
For relatively prime integers $a$ and $b$ both greater than one and odd integer $c$, there are at most two solutions in positive integers $(x,y,z)$ to the equation $a^x + b^y = c^z$.  There are an infinite number of $(a,b,c)$ giving exactly two solutions. 
\end{abstract}

\maketitle

\section{Introduction}
(This is a streamlined shortened version of the published paper: Number of Solutions to $a^x + b^y = c^z$, {\it Publ. Math. Debrecen}, Vol 88 (2016), pp. 131-138.)

This paper deals with the problem of finding an upper bound on the number of solutions in positive integers $x$, $y$, and $z$ to the equation
$$a^x + b^y = c^z  \eqno{(1.1)}$$
for integers $a$, $b$, and $c$, all greater than 1 with $\gcd(a,b)=1$.  Although there is much previous work on this problem, realistically low bounds on the number of solutions to (1.1) have been obtained only for the special case in which one of $x$, $y$, or $z$ is constant; results for this special case are obtained using lower bounds on linear forms in logarithms (e.g., \cite{Le1}, \cite{Be}).  For the more general case in which all of $x$, $y$, and $z$ are variable, Mahler \cite{M} used his $p$-adic analogue of the method of Thue-Siegel to prove that (1.1) has only finitely many solutions $(x,y,z)$ (see \cite{Le2}).  Later, Gelfond \cite{Ge} made Mahler's result effective. As pointed out by the anonymous referee of this paper, the existence of a bound on the number of solutions, independent of $a$, $b$, and $c$, follows from a result of Beukers and Schlickewei \cite{BS}.  Hirata-Kohno \cite{HK} used  \cite{BS} to obtain a bound of $2^{36}$ (the referee believes Hirata-Kohno may have later announced a bound of 200, apparently unpublished).  

Le \cite{Le2} dealt with the general case when all of $x$, $y$, and $z$ are variables with $c$ odd:  

\begin{TheoremA}[\cite{Le2}]
	If $2 \notdiv c$ then (1.1) has at most $2^{\omega(c)+1}$ solutions $(x,y,z)$ where $\omega(c)$ is the number of distinct prime factors of $c$. 
	Moreover, all solutions $(x,y,z)$ of (1.1) satisfy $z < (2ab \log(2 e ab))/\pi$.
\end{TheoremA}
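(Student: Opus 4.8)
The plan is to factor $c^{z}$ in imaginary quadratic rings, after first using elementary congruences to reduce (1.1) to a Pythagorean‑type equation, and then to read off the solution count from the ambiguity in that factorisation while bounding $z$ by a primitive‑divisor argument.

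\emph{Reductions.} First I would observe that $\gcd(a,b)=1$ together with (1.1) forces $a,b,c$ to be pairwise coprime, and that, $c$ being odd, $a^{x}+b^{y}$ is odd, so exactly one of $a,b$ is even; by the symmetry $a\leftrightarrow b,\ x\leftrightarrow y$ assume $2\mid a$. Reducing (1.1) modulo $4$ (and, where necessary, modulo odd prime divisors of $ab$) determines the parities of $x,y,z$ after a bounded case‑split on $a,b,c\bmod 4$. Apart from a controlled family of cases in which some exponent is small (typically $x=1$), one finds $x$ even together with $y$ even or $y$ odd; putting $A=a^{x/2}$ and $B=b^{y/2}$ (resp. $W=b^{(y-1)/2}$), the equation becomes $A^{2}+B^{2}=c^{z}$ (resp. $A^{2}+bW^{2}=c^{z}$).

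\emph{Factorisation and the count.} In the first case $(A+Bi)(A-Bi)=c^{z}$ with coprime factors in $\mathbb{Z}[i]$, so every prime of $c$ is $\equiv 1\pmod 4$, $c=\prod_{j=1}^{\omega(c)}(\pi_{j}\overline{\pi_{j}})^{e_{j}}$, and $A+Bi=i^{s}\prod_{j}\gamma_{j}^{ze_{j}}$ with $\gamma_{j}\in\{\pi_{j},\overline{\pi_{j}}\}$; the second case is the same with the prime ideals of $\mathbb{Q}(\sqrt{-b})$, keeping track of the class group. A solution is thus pinned down by the tuple $(\gamma_{j})_{j}$ — at most $2^{\omega(c)}$ of them — together with the exponent $z$ (the unit being then forced by positivity of $A,B$). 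Two solutions with the same tuple have distinct $z$, say $z_{1}<z_{2}$; writing $\delta=\prod_{j}\gamma_{j}^{e_{j}}$ one gets $A_{2}+B_{2}i=(A_{1}+B_{1}i)\cdot(\text{unit})\cdot\delta^{z_{2}-z_{1}}$, and requiring both coordinates positive while $A_{1},A_{2}$ are powers of $a$ and $B_{1},B_{2}$ are prime to $a$ limits this to at most one further solution; hence at most two solutions per tuple, i.e. at most $2^{\omega(c)+1}$ in all.

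\emph{Bounding $z$, and the obstacle.} Here $|\delta|^{2}=c$, and writing $\delta^{z}=P_{z}+N_{z}\sqrt{-b}$ one has $A=|P_{z}|$ and $W=|N_{z}|$, where $\{P_{z}\},\{N_{z}\}$ form a Lucas/Lehmer pair with $|N_{z}|=c^{z/2}\,|\sin(z\arg\delta)|/\sqrt{b}$ (and similarly, with $\sqrt{b}$ replaced by $1$, in the $\mathbb{Z}[i]$ case). Since $W$ (resp. $B$) is a power of $b$, every prime divisor of $N_{z}$ divides $b$; by the primitive‑divisor theorem for such sequences, for $z$ outside a short explicit list $N_{z}$ has a primitive prime divisor $r$, and $r\mid b$ together with the attached congruence ($r\equiv\pm1\pmod z$, or $r\mid z$) forces $z$ to be small; running the symmetric argument with $\{P_{z}\}$ and $a$, and making the estimate effective — the $|\sin|$ bound contributing the $\pi$ and the height terms the $\log(2eab)$ — yields $z<\dfrac{2ab\log(2eab)}{\pi}$. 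I expect the real work to be twofold: making this primitive‑divisor input completely effective with exactly that constant (handling the degenerate sequences and the sporadic small $z$ uniformly), and cleaning up the exceptional cases with one small exponent, where the solution count must instead be checked directly using bounds for equations with a fixed exponent as in \cite{Le1}, \cite{Be}.
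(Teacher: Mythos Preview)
Your reduction step is the real gap. You assert that congruences modulo $4$ and modulo odd primes of $ab$ generically force $x$ to be even, leaving only a ``controlled family'' of small-exponent exceptions to be handled separately; but this is not so. Already $2+7=3^{2}$ has $x=1$, and the infinite family $2+(2^{m}-1)=2^{m}+1$ shows that odd $x$ occurs for arbitrarily large $a,b,c$ --- these are not a finite or otherwise tame exceptional set, and no congruence argument pins the parity of $x$ (or $y$) to a single value. Consequently your passage to $A^{2}+B^{2}=c^{z}$ or $A^{2}+bW^{2}=c^{z}$ is not available in general, and the whole factorisation that follows rests on it. The paper's argument sidesteps this: it partitions solutions into the four \emph{parity classes} of $(x\bmod 2,\,y\bmod 2)$ and, for each class, takes $D$ to be the least integer with $a^{x}b^{y}/D$ a square (constant on the class). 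Then $\gamma(x,y,z)=a^{x}-b^{y}+2\sqrt{-a^{x}b^{y}}$ lies in $\mathbb{Q}(\sqrt{-D})$ with norm $c^{2z}$ in \emph{every} class, with no hypothesis on the individual parities. In $\mathbb{Q}(\sqrt{-D})$ the ideal $[c]$ admits exactly $2^{\omega(c)-1}$ factorisations $\mathfrak{c}\,\overline{\mathfrak{c}}$ with no rational-integer factor, and a lemma of Scott gives at most \emph{one} solution attached to each such factorisation in a given parity class; hence $4\cdot 2^{\omega(c)-1}=2^{\omega(c)+1}$.

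Your count of ``at most two solutions per tuple'' is also soft: the displayed relation $A_{2}+B_{2}i=(A_{1}+B_{1}i)(\text{unit})\,\delta^{\,z_{2}-z_{1}}$ together with the positivity and divisibility constraints you cite does not by itself bound the number of admissible $z$, and this is precisely the place where one needs a uniqueness lemma of Scott's type. Finally, for the bound on $z$ the paper simply quotes an elementary inequality from Hua (essentially a lower bound for $|\sin(z\theta)|$ combined with the size of the imaginary part); the primitive-divisor apparatus you propose is not used there and would not naturally output the constant $(2ab\log(2eab))/\pi$.
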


We give a brief outline of a proof of Theorem A, which differs somewhat from Le's proof, but will allow us to establish some notation for our own variant of Theorem A. We say that two solutions $(x_1, y_1)$ and $(x_2, y_2)$ to (1.1) are in the same {\it parity class} if $2 \mid x_1 - x_2$ and $2 \mid y_1 - y_2$.    

For each parity class we define $D$ to be the least integer such that $\frac{a^x b^y}{D}$ is a square for any choice of $x$ and $y$ in the parity class.  If $(x,y,z)$ is a solution to (1.1) with $x$ and $y$ in a given parity class, we say that the solution $(x,y,z)$ is in that parity class.  We then define the integer $\gamma(x,y,z)$ in $\ratQ(\sqrt{-D})$:
$$ \gamma(x,y,z) = a^x - b^y + 2 \sqrt{ -a^{x} b^{y} }. \eqno{(1.2)}$$
The norm of $\gamma(x,y,z)$ is $c^{2z}$.  Let $C_D = \{ \German{c}_1 \conj{\German{c}_1},  \German{c}_2 \conj{\German{c}_2}, \dots, \German{c}_g \conj{\German{c}_g} \}$ be the set of all factorizations of $[c]$ into two ideals in $\ratQ(\sqrt{-D})$ such that, for $1 \le i \le g$, no $\German{c}_i$ is divisible by a principal ideal with a rational integer generator.  $g= 2^{\omega(c)-1}$.  
For any $(x,y,z)$,  $[\gamma(x,y,z)] $ must be divisible by exactly one of the ideals $\German{c}_1$, $\conj{\German{c}_1}$,  $\German{c}_2$, $\conj{\German{c}_2}$, \dots, $\German{c}_g$, $\conj{\German{c}_g}$.  We say that the solution $(x,y,z)$ to (1.1) is {\it associated} with the ideal factorization $\German{c}_k \conj{\German{c}_k} $ when either $\German{c}_k $ or $\conj{\German{c}_k} $ divides $[\gamma(x,y,z)]$, where $\German{c}_k \conj{\German{c}_k} \in C_D$.  It is an old result (see Lemma 2 of Section 2) that there is at most one solution $(x,y,z)$ associated with a given ideal factorization in $C_D$ (with one easily handled exception).  Since $g = 2^{\omega(c)-1}$ and there are four parity classes as defined above, we obtain the result in the first sentence of Theorem A.  The result in the second sentence of Theorem A follows from a result in Hua \cite{H}.  

In this paper we will show that, for a given $(a,b,c)$, any solution $(x,y,z)$ must occur in one of at most two parity classes of $x$ and $y$.  We then show that any solution in a given parity class must be associated with one of at most two ideal factorizations in $C_D$, regardless of the number of distinct primes dividing $c$.  We also show that if solutions occur in two parity classes, then any solution in a given parity class must be associated with the same ideal factorization in $C_D$ as any other solution in the same parity class.  With these results we obtain: 

\begin{theorem}  % 1 
	For relatively prime integers $a$ and $b$ both greater than one and odd integer $c$, there are at most two solutions in positive integers $(x,y,z)$ to (1.1); any solution $(x,y,z)$ must satisfy $z < ab/2$.
\nopagebreak

	There are an infinite number of such $(a,b,c)$ giving exactly two solutions. 
\end{theorem}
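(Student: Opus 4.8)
The plan is to build on the three structural reductions announced in the introduction: (a) all solutions lie in at most two parity classes of $(x,y)$; (b) within a given parity class, all solutions are associated with at most two ideal factorizations in $C_D$; and (c) if two parity classes are genuinely occupied, then all solutions within one class share a single ideal factorization. I would first treat the case where all solutions lie in a single parity class. By (b) there are at most two ideal factorizations involved, and by Lemma 2 (quoted as ``an old result'' in the excerpt) each ideal factorization carries at most one associated solution, so we get at most two solutions. The harder case is when solutions occur in two distinct parity classes: by (c), each class contributes at most one solution (since all its solutions share one ideal factorization, which by Lemma 2 forces uniqueness), so again we get at most two solutions total. Thus in both cases we obtain the bound of two, and this is the crux of the upper-bound half of the theorem.

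Next I would establish the size bound $z < ab/2$. Theorem A already gives $z < (2ab\log(2eab))/\pi$ via Hua's result; the sharper bound $z < ab/2$ should come from combining the parity-class analysis with a more careful application of the same elementary estimates (or a Legendre-symbol / quadratic-residue argument on $c^z \equiv a^x \pmod{b^y}$ and $c^z \equiv b^y \pmod{a^x}$), using the fact that at most two parity classes survive to tighten the constant. I expect this to be mostly bookkeeping once the parity reduction is in hand, so I would defer the precise constant-chasing to the body of the proof.

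Finally, for the existence of infinitely many $(a,b,c)$ with exactly two solutions, I would exhibit an explicit parametric family. The classical example $3^x + 5^y = 2^z$ has two solutions (and the Fermat-style identity $(2^{n}-1) + 1 = 2^{n}$ hints at families), but since $c$ must be odd here I would instead look for a family such as $a^x + b^y = c^z$ with $c$ odd where one checks two small solutions directly — for instance taking $a = 2^k - 1$, $b$ chosen so that $a + b = c$ and $a^? + b^? = c^?$ admits a second solution, e.g.\ families built around $a + b^2 = c$ or $a^2 + b = c^2$ type identities. The key is to produce one parametrized solution by an algebraic identity and a second ``sporadic'' small solution (like $x=y=z=1$), then verify that the $(a,b,c)$ so produced are pairwise distinct and infinite in number. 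I would close by remarking that the upper bound just proved guarantees these families have \emph{exactly} two solutions, not more.

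The main obstacle I anticipate is step (c) from the introduction — showing that two solutions in the same parity class must be associated with the \emph{same} ideal factorization when a second parity class is also occupied. This is the genuinely new ingredient beyond Le's argument (Le only bounds the number of factorizations by $g$), and it presumably requires a delicate interaction argument: using a solution in the second parity class to constrain, via norm or congruence conditions in $\ratQ(\sqrt{-D})$, which $\German{c}_k$ can divide $[\gamma(x,y,z)]$ for solutions in the first class. Everything else — the single-class case, the size bound, and the construction of the infinite family — should be comparatively routine given the machinery already set up.
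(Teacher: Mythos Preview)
Your combinatorial skeleton is exactly the paper's: Lemmas 1, 2, and 4 combine precisely as you describe (one parity class $\Rightarrow$ at most two factorizations $\Rightarrow$ at most two solutions; two parity classes $\Rightarrow$ one factorization each $\Rightarrow$ at most two solutions). You also correctly flag step (c) as the novel ingredient; the paper proves it (Lemma 4) by an explicit congruence computation modulo $c$ producing a quantity $\delta$ with $\delta^2 \equiv 1 \bmod c$, and then uses an auxiliary Lemma 3 (that two occupied parity classes force $a^{u(a)/2} \equiv -1$ or $b^{u(b)/2} \equiv -1 \bmod c$) to pin $\delta \equiv \pm 1$ in the two-class case.

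Three concrete gaps, however. First, your plan for $z < ab/2$ is off track: the paper does not extract this from the parity-class analysis or by tightening Hua's estimate, but simply cites Theorem 3 of \cite{ScSt}; no in-paper work is done for this bound, and your proposed quadratic-residue route would not by itself produce the constant $ab/2$. Second, your infinite-family discussion is too vague --- the paper dispatches it in one line with the explicit family $(a,b,c) = (2,\, 2^m - 1,\, 2^m + 1)$ and the two solutions $(x,y,z) = (1,1,1)$ and $(m+2, 2, 2)$; no search or ``sporadic plus identity'' construction is needed. Third, you overlook the exception in Lemma 2: the ``at most one solution per ideal factorization'' statement fails for $(a,b,c) = (3,10,13)$ (or $(10,3,13)$), and the paper disposes of this case separately at the very end by a congruence modulo $13$ together with Theorem 1 of \cite{Sc}.
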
 

(The bound on $z$ follows from Theorem 3 of \cite{ScSt}.  The infinite family $(a,b,c: x_1,y_1, z_1; x_2,y_2,z_2) = (2,2^m - 1, 2^m+1: 1,1,1; m+2, 2,2)$ suffices to verify that there are an infinite number of $(a,b,c)$ giving exactly two solutions.)   

The result that there are at most two solutions to (1.1) in the special case in which either $x$ or $y$ is a fixed constant (usually 1) has been obtained by Bennett \cite{Be} using lower bounds on linear forms in logarithms when $\gcd(a,b)=1$ and using elementary methods when $\gcd(a,b)>1$.  Theorem 1 above provides an elementary proof of Bennett's result for the case $c$ odd.

\section{ Proof of Theorem 1}  % 2 

Throughout this proof we assume that $c$ is an odd integer and that $a$ and $b$ are relatively prime integers greater than 1.  We also assume throughout this proof that (1.1) has solutions $(x_1,y_1,z_1)$, $(x_2, y_2, z_2)$, \dots, $(x_n, y_n, z_n)$ such that there is no integer greater than 1 dividing all of $x_1$, $x_2$, \dots, $x_n$, and no integer greater than 1 dividing all of $y_1$, $y_2$, \dots, $y_n$, where $n>1$.  Note that this assumption (which may involve redefining $a$ and $b$) does not affect the number of solutions $(x,y,z)$, or the value of $D$ corresponding to a given solution, or the value of any $\gamma(x,y,z)$.  We will prove Theorem 1 by using four lemmas.    

\begin{lemma} % 1
For a given $(a,b,c)$, all solutions $(x,y,z)$ to (1.1) occur in at most two parity classes of $x$ and $y$.  
\end{lemma}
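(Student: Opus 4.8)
The plan is to argue by contradiction, assuming that solutions to (1.1) occur in three or four distinct parity classes, and to extract enough congruence information modulo small powers of $2$ to force a contradiction with the coprimality setup (recall $c$ is odd, $\gcd(a,b)=1$, and we have arranged that no integer exceeds $1$ divides all the $x_i$ or all the $y_i$). First I would observe that since $c$ is odd, for any solution $a^x+b^y=c^z$ we have $a^x+b^y\equiv 1\pmod 2$, so exactly one of $a,b$ is even; say $b$ is even and $a$ is odd (the case $a$ even is symmetric, and the case where both are odd is impossible). The parity class of a solution is then determined by the pair $(x\bmod 2,\, y\bmod 2)$, so there are a priori four classes: I want to show at most two can be populated.

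The key step is to look modulo $4$ (and, if needed, modulo $8$). Write $b=2^s b_0$ with $b_0$ odd and $s\ge 1$. Working $2$-adically, $a^x+b^y=c^z$ gives $a^x\equiv c^z\pmod{2^{\min(sy,\ldots)}}$, and the value of $a^x\bmod 4$ depends only on $x\bmod 2$ (since $a$ is odd, $a^2\equiv 1\pmod 8$), while $c^z\bmod 4$ depends only on $z\bmod 2$. The plan is to show that the parity class of $(x,y)$ together with the equation pins down $z\bmod 2$, and conversely, so that two solutions in different parity classes are forced into a rigid relationship. More concretely, I would split on the size of $sy$ relative to $2$ or $3$: if $sy\ge 2$ then $a^x\equiv c^z\pmod 4$, which ties $x\bmod 2$ to $z\bmod 2$; pushing to mod $8$ when $sy\ge 3$ ties things down further, and the small remaining cases ($s=1$ with $y$ odd, so $sy=1$) can be handled directly by reducing mod $4$: then $a^x+2b_0\equiv c^z\pmod 4$ with $b_0$ odd, which again determines $x\bmod 2$ from $z\bmod 2$. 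Collecting these, one sees that at most two combinations of $(x\bmod 2,\,y\bmod 2)$ are compatible with the finitely many possibilities for $z\bmod 2$ and the fixed $2$-adic data of $a,b,c$; concretely, $y\bmod 2$ will turn out to be essentially forced (or two-valued) and then $x\bmod 2$ is determined by $z\bmod 2$, giving at most two classes.

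The main obstacle I anticipate is the bookkeeping in the boundary case $s=1$, where $b\equiv 2\pmod 4$ and the $2$-adic valuation of $b^y$ is exactly $y$, so one cannot simply "kill" the $b^y$ term by passing to a high power of $2$; here the argument must track $a^x\bmod 2^{y+1}$ honestly, and one needs that $a$ has a well-controlled multiplicative order mod powers of $2$ (namely $a^x\bmod 2^k$ depends only on $x$ modulo $2^{k-2}$ for $k\ge 3$). A secondary subtlety is making sure the reduction "no common factor among the $x_i$, none among the $y_i$" is actually used — it should enter to rule out the degenerate possibility that all solutions share the same parity in $x$ or in $y$ in a way that would otherwise permit a third class; in fact I expect the clean statement is that once one parity of $x$ and one parity of $y$ are each represented, the equation mod $8$ (or mod $4$) leaves room for at most one more class, never three. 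I would finish by enumerating the at most four residue patterns and checking directly that at most two survive the congruence constraints derived above.
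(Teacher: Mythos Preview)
Your approach has a genuine gap: working $2$-adically cannot constrain the parity of the exponent on the \emph{even} base. Say $b$ is even, $b=2^s b_0$ with $b_0$ odd. For any modulus $2^N$, once $sy\ge N$ we have $b^y\equiv 0\pmod{2^N}$, so the congruence $a^x+b^y\equiv c^z\pmod{2^N}$ is completely insensitive to $y\bmod 2$ for large $y$. Your argument can pin down $x\bmod 2$ (since $a$ is odd and $a^2\equiv 1\pmod 8$), and it can sometimes force $y$ to equal a specific small value, but it can never rule out one parity of $y$ across all large $y$. Concretely, take $(a,b,c)=(7,2,9)$. Modulo $8$ (or $16$, or $2^N$) you get: $x$ odd forces $y=1$, while $x$ even forces $y\ge 3$ with no parity restriction on $y$. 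That leaves three parity classes $(x\bmod 2,y\bmod 2)\in\{(1,1),(0,0),(0,1)\}$ unexcluded by any $2$-adic argument. Yet the lemma says at most two classes occur, and indeed here $y$ must be odd.

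The paper's proof avoids this by working modulo an odd prime $p\mid c$, where both $a$ and $b$ are units. Taking a primitive root $d$ with $a\equiv d^r$, $b\equiv d^s\pmod p$, the relation $a^x\equiv -b^y\pmod p$ becomes the linear congruence $rx\equiv \tfrac{p-1}{2}+sy\pmod{p-1}$. Now the parities of $x$ and $y$ enter symmetrically, and a short case analysis on the $2$-adic valuations of $r$, $s$, and $(p-1)/2$ shows directly that at most two parity classes of $(x,y)$ can satisfy this congruence. In the example above, $p=3$, $d=2$, $r=0$, $s=1$, giving $0\equiv 1+y\pmod 2$, i.e.\ $y$ odd --- exactly the constraint your $2$-adic method cannot see. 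The assumption about no common factor among the $x_i$ or $y_i$ is not used in this lemma at all; it enters only later, in Lemma~4.
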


\begin{proof}
It suffices to show that, for any prime $p$ dividing $c$, the congruence 
$$ a^x \equiv - b^y \bmod p$$
has solutions $(x,y)$ in at most two parity classes.  Let $d$ be a primitive root of $p$ with $a \equiv d^r \bmod p$ and $b \equiv d^s \bmod p$, $ 0 \le r, s < p-1$.  So we are considering the congruence 
$$ d^{rx} \equiv - d^{sy} \equiv d^{ \frac{p-1}{2} + sy}  \bmod p $$
or, equivalently, 
$$ rx \equiv \frac{p-1}{2} + sy \bmod p-1. $$
Let $2^u || r$, $2^v || s$, and $2^w || \frac{p-1}{2}$, taking $u \le v$.  Then $u=v<w$ requires $ 2 \mid x-y$, $u=v=w$ requires $2 \notmid x-y$, and $u =v > w$ cannot occur, so that, when $u=v$, at most two parity classes of $x$ and $y$ are possible.  $u < \min(v,w)$ requires $ 2 \mid x$; $u=w<v$ requires $2 \notmid x$; and $w<u<v$ cannot occur; thus, in all cases, at most two parity classes of $x$ and $y$ are possible.  
\end{proof}

\begin{lemma} % 2
In a given parity class of $x$ and $y$, there is at most one solution $(x,y,z)$ to (1.1) associated with a given ideal factorization in $C_D$, except when $(a,b,c)=(3, 10, 13)$ or $(10,3,13)$.  
\end{lemma}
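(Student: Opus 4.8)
The plan is to suppose that two \emph{distinct} solutions $(x_1,y_1,z_1)$ and $(x_2,y_2,z_2)$ lie in the same parity class and are associated with the same ideal factorization $\German{c}_k\conj{\German{c}_k}\in C_D$, and to deduce that $\gamma_1:=\gamma(x_1,y_1,z_1)$ and $\gamma_2:=\gamma(x_2,y_2,z_2)$ must essentially coincide, the sole obstruction being $(a,b,c)=(3,10,13)$ or $(10,3,13)$. The first step is purely ideal-theoretic. Since $\gcd(a,b)=1$ forces $\gcd(c,ab)=1$, and $D$ is built only from primes dividing $ab$ so $\gcd(c,D)=1$, no prime above $c$ is inert or ramified in $\ratQ(\sqrt{-D})$; hence $N(\German{c}_k)=c$. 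Moreover $[\gamma_i]$ and $[\conj{\gamma_i}]$ share no prime factor above $c$: a common factor divides $\gamma_i-\conj{\gamma_i}=4\sqrt{-a^{x_i}b^{y_i}}$ and $\gamma_i+\conj{\gamma_i}=2(a^{x_i}-b^{y_i})$, hence has norm dividing $\gcd\bigl(16\,a^{x_i}b^{y_i},\,4(a^{x_i}-b^{y_i})^2\bigr)=4$ (using $\gcd(a,b)=1$ and that exactly one of $a,b$ is even), so it cannot lie above the odd $c$. Consequently $[\gamma_i]=\German{c}_k^{2z_i}$ for a suitable member of the conjugate pair, and, after replacing $\gamma_2$ by $\conj{\gamma_2}$ if needed, we may take the same $\German{c}_k$ for $i=1,2$. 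Then $[\gamma_1^{z_2}]=[\gamma_2^{z_1}]=\German{c}_k^{2z_1z_2}$, so $\gamma_1^{z_2}=\varepsilon\,\gamma_2^{z_1}$ for a unit $\varepsilon$ of the ring of integers. For $D\notin\{1,3\}$ this gives $\varepsilon=\pm1$; in the two remaining cases one uses a parity observation — $\operatorname{Re}(\gamma_i)=a^{x_i}-b^{y_i}$ is odd while the coefficient of $\sqrt{-D}$ is even, a pattern stable under powering but destroyed by multiplying by $\pm i$ or by a primitive cube or sixth root of unity — so again $\varepsilon=\pm1$.

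If $z_1=z_2$, then $(\gamma_1/\gamma_2)^{z_1}=\pm1$, so $\gamma_1/\gamma_2$ is a root of unity and hence $\gamma_1=\pm\gamma_2$ (using the parity remark when $D\in\{1,3\}$); since $\gamma(x,y,z)$ determines $(a^x,b^y,c^z)$ via $c^z=\sqrt{N(\gamma)}$ and $a^x=\tfrac12(\operatorname{Re}\gamma+c^z)$, this forces $(x_1,y_1,z_1)=(x_2,y_2,z_2)$, the alternative $\gamma_1=-\gamma_2$ being impossible as it would give $a^{x_1}=b^{y_2}$. So assume $z_1<z_2$, and write $g=\gcd(z_1,z_2)$, $z_1=gm$, $z_2=gn$ with $\gcd(m,n)=1$ and $1\le m<n$. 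Choosing integers $u,v$ with $um+vn=1$ shows $\German{c}_k^{2g}=[\gamma_1^u\gamma_2^v]$ is principal, say $\German{c}_k^{2g}=[\eta]$ with $N(\eta)=c^{2g}$; then $\gamma_1=\pm\eta^m$ and $\gamma_2=\pm\eta^n$.

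It remains to show that $m=n=1$ is forced except in the exceptional case, and this is where the real work — and the main obstacle — lies. Expanding $\eta^j=P_j+Q_j\sqrt{-D}$, the integers $Q_j$ are the $\sqrt{-D}$-coefficient of $\eta$ times the terms $U_j$ of the Lucas sequence with companion polynomial $T^2-2\operatorname{Re}(\eta)\,T+c^{2g}$, whose discriminant is $-4D$ times the square of that coefficient. Matching $\sqrt{-D}$-coefficients in $\gamma_2=\pm\eta^n$ and $\gamma_1=\pm\eta^m$, and using $D s_i^2=a^{x_i}b^{y_i}$ together with the common parity class, one finds that each of $U_m,U_n$ is, up to sign, a product of prime powers of $a$ and of $b$ only — e.g.\ in the case $m=1$, directly $U_n=\pm a^{(x_2-x_1)/2}b^{(y_2-y_1)/2}$ — whereas the discriminant of the sequence is divisible by no prime outside $\{2\}\cup\{p:p\mid ab\}$. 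A prime dividing $U_n$ but not the discriminant would then be an odd prime not dividing $ab$, a contradiction; hence $U_n$ has no primitive prime divisor. By the primitive divisor theorem for Lucas sequences (Bilu--Hanrot--Voutier — or, in this low range, elementary congruence and size estimates suffice) this confines $n$ (and similarly $m$) to a short explicit list. Running through that list — in each case the identity $\gamma_2=\pm\eta^n$ forces an explicit low-degree equation in $a^{x_1},b^{y_1},c^{z_1}$ whose two summands must be a prime power of $a$ and a prime power of $b$ — leaves exactly $\gamma_2=\gamma_1^{\,3}$ with $\gamma_1=-7+2\sqrt{-30}$, that is $(a,b,c)=(3,10,13)$ with solutions $(1,1,1),(7,1,3)$, and its mirror $(10,3,13)$. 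Eliminating the intermediate powers cleanly, and verifying that no other Lucas configuration survives all the arithmetic constraints, is the step I expect to require the most care; everything preceding it is routine ideal arithmetic.
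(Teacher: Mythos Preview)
The paper does not prove this lemma itself; it simply cites the opening paragraph of the proof of Theorem~2 in \cite{Sc}, noting that the lone relevant exception there is $(a,b,c)=(3,10,13)$. Your sketch is essentially a reconstruction of that classical argument: from $[\gamma_i]=\German{c}_k^{2z_i}$ you pass to $\gamma_1=\pm\eta^m$, $\gamma_2=\pm\eta^n$ for a generator $\eta$ of $\German{c}_k^{2\gcd(z_1,z_2)}$, and the comparison of $\sqrt{-D}$-coefficients forces the Lucas terms $U_m,U_n$ to have all prime factors dividing $ab$, hence dividing the discriminant, so no primitive divisor. Two small points you pass over deserve a line each: that every prime dividing $c$ actually \emph{splits} in $\ratQ(\sqrt{-D})$ is not a consequence of $\gcd(c,D)=1$ alone, but follows from your coprimality of $[\gamma_1]$ and $[\conj{\gamma_1}]$ above $c$; and one should check that $\eta$ may be taken in $\mathbb{Z}[\sqrt{-D}]$ with the same odd-real/even-imaginary parity as the $\gamma_i$, so that the unit reduction for $D\in\{1,3\}$ still yields $\gamma_i=\pm\eta^{m_i}$ rather than a stray fourth or sixth root of unity. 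Invoking Bilu--Hanrot--Voutier is heavier machinery than \cite{Sc} (1993) had available, but, as you note, elementary size and congruence estimates already handle the small $n$ that arise here; the residual case-check you flag is exactly where \cite{Sc} does its work and is what isolates $(3,10,13)$.
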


\begin{proof}
See the first paragraph of the proof of Theorem 2 of \cite{Sc}, noting that the only relevant instance of exception (iii) in Theorem 1 of \cite{Sc} is given by $(a,b,c)=(3,10,13)$.  
\end{proof}

We define $u(m)$ to be the least positive integer $t$ such that $m^t \equiv 1 \bmod c$.

\begin{lemma} % 3
If (1.1) has solutions in more than one parity class, then we must have either 
$a^{u(a)/2} \equiv -1 \bmod c$ or $b^{u(b)/2} \equiv -1 \bmod c$. 
\end{lemma}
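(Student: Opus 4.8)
I would extract everything from a single congruence modulo $c$ produced by two solutions lying in different parity classes. The goal is to show that $-1$ belongs to the cyclic subgroup $\langle a\rangle$ or $\langle b\rangle$ of $(\mathbb{Z}/c\mathbb{Z})^\times$, which is exactly the assertion $a^{u(a)/2}\equiv -1$ resp.\ $b^{u(b)/2}\equiv -1\pmod c$. First note that $\gcd(a,b)=1$ forces $\gcd(a,c)=\gcd(b,c)=1$ (any common prime factor of $a$ and $c$ would divide $b^y$, hence $b$), so $a,b$ are units mod $c$ and $u(a),u(b)$ are defined. Since (1.1) has solutions in more than one parity class (by Lemma 1 there are in fact exactly two, but we only need at least two), fix a solution $(x_1,y_1,z_1)$ in one class and $(x_2,y_2,z_2)$ in another and set $\delta_x=x_1-x_2$, $\delta_y=y_1-y_2$; by the definition of parity class, $\delta_x$ and $\delta_y$ are not both even. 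Reducing (1.1) modulo $c$ for the two solutions gives $a^{x_1}\equiv -b^{y_1}$ and $a^{x_2}\equiv -b^{y_2}$, hence $a^{\delta_x}\equiv b^{\delta_y}\pmod c$.

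The heart of the matter is a short elimination performed inside $(\mathbb{Z}/c\mathbb{Z})^\times$, so that negative exponents are harmless. Raising $a^{x_1}\equiv -b^{y_1}$ to the power $\delta_y$ and substituting $b^{\delta_y}\equiv a^{\delta_x}$ collapses the right-hand side to $(-1)^{\delta_y}a^{\delta_x y_1}$, which gives $a^{\Delta}\equiv(-1)^{\delta_y}\pmod c$ with $\Delta:=x_1\delta_y-\delta_x y_1=x_2y_1-x_1y_2$; the mirror-image computation starting from $b^{y_1}\equiv -a^{x_1}$ yields $b^{\Delta}\equiv(-1)^{\delta_x}\pmod c$ with the same $\Delta$. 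Since $\delta_x,\delta_y$ are not both even, at least one of the two right-hand sides equals $-1$: if $\delta_y$ is odd then $a^{\Delta}\equiv -1\pmod c$, and if $\delta_x$ is odd then $b^{\Delta}\equiv -1\pmod c$.

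It remains to translate, say, $a^{\Delta}\equiv -1\pmod c$ into the stated form: $-1=a^{\Delta}\in\langle a\rangle$, and since $-1$ has order $2$ this forces $2\mid u(a)$; as $\langle a\rangle$ is cyclic of order $u(a)$ its unique element of order $2$ is $a^{u(a)/2}$, so $a^{u(a)/2}\equiv -1\pmod c$ (and symmetrically for $b$). I do not expect a real obstacle; the ideal-theoretic machinery of Section 1, the sharper modulus $c^{z}$, and the normalization $\gcd(x_1,\dots,x_n)=\gcd(y_1,\dots,y_n)=1$ all seem unnecessary, and the only point needing care is the parity bookkeeping — confirming that solutions in different parity classes genuinely forbid $\delta_x$ and $\delta_y$ from both being even, and keeping signs straight when raising congruences to possibly negative powers, which is why the whole computation is best done inside $(\mathbb{Z}/c\mathbb{Z})^\times$.
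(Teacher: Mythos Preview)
Your argument is correct and is a tidier variant of the paper's own proof. Both proofs start from the same pair of congruences $a^{x_i}\equiv -b^{y_i}\pmod c$ ($i=1,2$) and eliminate one base to exhibit $-1$ as a power of the other; they differ only in how the elimination is carried out. The paper does it by an $\lcm$ trick: assuming without loss of generality that $x_1$ is odd and $x_2$ even, it raises $a$ to the odd exponent $L=\lcm(x_1,x_1-x_2)$ and compares the two resulting expressions for $a^L\pmod c$, so that the sign $(-1)^{L/x_1}=-1$ survives and produces $b^q\equiv -1\pmod c$. Your elimination is via the ``determinant'' $\Delta=x_2y_1-x_1y_2$, obtained by cross-multiplying exponents inside $(\mathbb{Z}/c\mathbb{Z})^\times$; this avoids the WLOG relabelling and the $\lcm$, and handles negative exponents uniformly. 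The payoff is identical --- differing $x$-parities give the $b$-conclusion and differing $y$-parities give the $a$-conclusion --- but your route reaches it in one line. You are also right that neither the ideal-theoretic machinery nor the normalisation $\gcd(x_1,\dots,x_n)=\gcd(y_1,\dots,y_n)=1$ is used here; those are invoked only in Lemma~4.
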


\begin{proof}
Assume (1.1) has two solutions $(x_1,y_1,z_1)$ and $(x_2,y_2,z_2)$ such that $2 \notdiv x_1$ and $2 \mid x_2$.  We have
$ a^{x_1} \congruent -b^{y_1} \bmod c$ and $a^{x_2} \congruent -b^{y_2} \bmod c$ so that $a^{\vert x_1-x_2\mid} \congruent  b^{t} \bmod c$ for some $t$ 
such that $0 \le t < u(b)$.  Let $L = \lcm(x_1, x_1-x_2)$ so that $L$ is odd.  Then we have
$$ a^{L} \congruent (-b^{y_1})^{L/{x_1}} \congruent (b^t)^{L/{\vert x_1-x_2\mid}}  \bmod c,  $$
so that the congruence $b^q \congruent -1 \bmod c$ has a solution $q$.  This requires $b^{u(b)/2} \congruent -1 \bmod c$.  
	
Similarly, if (1.1) has two solutions $(x_1, y_1, z_1)$ and $(x_2,y_2, z_2)$   such that $2  \notdiv  y_1-y_2$, we must have $a^{u(a)/2} \congruent  -1 \bmod c$.    
\end{proof}

For the proof of Lemma 4 which follows, we use the following definition:  
we say that $h_1+k_1 \sqrt{-D} \congruent h_2+k_2 \sqrt{-D} \bmod c$ if $h_1 \congruent h_2 \bmod c$ and $k_1 \congruent k_2 \bmod c$.

\begin{lemma} % 4
All solutions to (1.1) in a given parity class are associated with one of at most two ideal factorizations in the set $C_D$.  If (1.1) has solutions in more than one parity class, then any two solutions in the same parity class must be associated with the same ideal factorization in the set $C_D$.   
\end{lemma}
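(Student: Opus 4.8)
The plan is to work with the element $\gamma(x,y,z) = a^x - b^y + 2\sqrt{-a^x b^y}$ defined in $(1.2)$ and to track, for solutions in a fixed parity class, which ideal $\German{c}_k$ or $\conj{\German{c}_k}$ in $C_D$ divides $[\gamma(x,y,z)]$. First I would reduce $\gamma(x,y,z)$ modulo $c$ in the sense defined just before the lemma: since $a^x \equiv -b^y \bmod c$ for any solution, we get $a^x - b^y \equiv 2a^x \equiv -2b^y \bmod c$ and $-a^x b^y \equiv (a^x)^2 \bmod c$, so $\gamma(x,y,z) \equiv 2a^x(1 \pm \sqrt{-1})$-type expression modulo $c$, up to the ambiguity in choosing the square root. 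The key observation is that within one parity class $a^x b^y / D$ is a fixed square, so $\sqrt{-a^x b^y}$ differs from $\sqrt{-D}$ only by a rational integer factor, and hence the "angular" part of $\gamma$ modulo each prime $p \mid c$ depends only on $a^x \bmod p$ (equivalently on $x \bmod u_p(a)$ and $y \bmod u_p(b)$ for the local orders). So I would compare two solutions $(x_1,y_1,z_1)$, $(x_2,y_2,z_2)$ in the same parity class by examining the ratio $\gamma(x_1,y_1,z_1)/\conj{\gamma(x_2,y_2,z_2)}$ or the relevant quotient of ideals, and show its prime factorization at each $p \mid c$ is controlled by $a^{x_1 \pm x_2} \bmod p$.

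The second step is to count: for a fixed parity class, as $(x,y)$ ranges over that class, how many distinct ideal factorizations in $C_D$ can arise? I would argue that the map sending a solution to its associated factorization factors through the residue of $a^x$ modulo $c$ (or modulo the odd part relevant to $D$), and that the constraint $a^x \equiv -b^y \bmod c$ together with the parity restriction forces $a^x$ into at most two residue classes that matter for the ideal splitting — essentially because changing $x$ by the order $u(a)$ returns to the same factorization, and within a parity class $x$ is determined modulo $2$, leaving a two-fold ambiguity coming from whether $a^{u(a)/2} \equiv \pm 1 \bmod c$ (and similarly for $b$). This is where Lemma 3 enters: if solutions occur in more than one parity class, Lemma 3 gives $a^{u(a)/2} \equiv -1 \bmod c$ or $b^{u(b)/2} \equiv -1 \bmod c$, and I would show this extra relation collapses the two-fold ambiguity within each parity class down to one, because the sign that distinguished the two candidate factorizations is now pinned down by the $-1$ congruence. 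Concretely, if $a^{u(a)/2}\equiv -1 \bmod c$, then $a^{x}$ and $a^{x+u(a)/2}$ are negatives mod $c$, and $\gamma$ for arguments differing this way land in conjugate ideals; but the parity class fixes $x \bmod 2$ and $u(a)/2$ has a definite parity, so only one of the two is actually achievable.

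I expect the main obstacle to be bookkeeping the square-root ambiguity consistently across all primes $p \mid c$ simultaneously — i.e., verifying that the "two factorizations" obtained prime-by-prime glue to at most two global factorizations in $C_D$ rather than $2^{\omega(c)}$ of them. The essential point must be that the choice of sign of $\sqrt{-a^x b^y}$ is a single global choice, not an independent choice at each prime, so once $D$ and the parity class are fixed, the local ambiguities are synchronized; I would make this precise by noting that $C_D$ consists of factorizations $\German{c}_k \conj{\German{c}_k}$ of $[c]$ where $\German{c}_k$ contains no rational principal ideal, so the $g = 2^{\omega(c)-1}$ elements of $C_D$ are permuted transitively by an appropriate group action, and the residue of $a^x$ modulo $c$ — which takes at most two relevant values per parity class — determines the orbit point. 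Finally I would dispatch the exceptional cases $(a,b,c) = (3,10,13)$ and $(10,3,13)$ flagged in Lemma 2 by direct inspection, as is standard in this circle of arguments, or note they do not affect the conclusion of Lemma 4 since Lemma 4 does not assert uniqueness within a factorization.
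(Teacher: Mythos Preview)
Your outline captures the correct architecture --- compare $\gamma(x_2,y_2,z_2)$ to $\gamma(x_1,y_1,z_1)$ modulo $c$ and track which ideal in $C_D$ divides each --- but it misses the step that actually produces the bound of two. You correctly name the obstacle in your last paragraph: the discrepancy between the two $\gamma$'s is governed by an element $\delta$ with $\delta^2 \equiv 1 \bmod c$, and such $\delta$ number $2^{\omega(c)}$, not two. Your proposed resolution (that the sign of $\sqrt{-a^x b^y}$ is a single global choice, so the local ambiguities are ``synchronized'') does not address this: the global sign is already fixed once $\gamma$ is written down, and the $2^{\omega(c)}$ candidate values of $\delta$ are genuine square roots of $1$ modulo $c$, one for each sign pattern across the primes dividing $c$. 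Nothing in your argument rules most of them out.

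The paper closes this gap via the normalization stated at the start of Section~2: one may assume $\gcd(x_1,\ldots,x_n)=1$ and $\gcd(y_1,\ldots,y_n)=1$. From $a^{x_i}\equiv -b^{y_i}\bmod c$ for all $i$ and a B\'ezout combination of the exponents one obtains $a\equiv \pm b^{t_1}$ and $b\equiv \pm a^{t_2}\bmod c$. This is the missing idea: it forces $\delta\equiv \pm g^r\bmod c$ for $g\in\{a,b\}$, and then $\delta^2\equiv 1$ gives $r=0$ or $r=u(g)/2$, hence at most two values of $g^r$ and at most two ideal factorizations. For the second sentence, Lemma~3 supplies $g^{u(g)/2}\equiv -1\bmod c$, so the two values of $g^r$ are $\pm 1$, $\delta\equiv\pm 1$, and $\beta$ equals $\gamma_1$ or $\conj{\gamma_1}$ --- the same factorization. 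Your alternative mechanism (``$u(a)/2$ has a definite parity, so only one of the two is achievable'') does not work as stated: $x_2-x_1$ is merely even, not a multiple of $u(a)/2$, so the parity of $u(a)/2$ is not what constrains $\delta$. Finally, the $(3,10,13)$ exception belongs to Lemma~2, not Lemma~4, so nothing needs to be dispatched here.
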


\begin{proof}
	Let $(x_1,y_1,z_1)$ and $(x_2,y_2,z_2)$ be two solutions in the same parity class, with $x_1$ the least $x$ occurring in any solution in the parity class and $(x_2,y_2,z_2)$ any other solution in the parity class.  We have 
	$$ \gamma(x_1,y_1,z_1) = a^{x_1} - b^{y_1} + 2 \sqrt{-a^{x_1} b^{y_1}}  \eqno{(2.1)}$$
	and 
	$$ \gamma(x_2,y_2,z_2) \congruent  a^{x_2} - b^{y_2} + 2 a^{(x_2-x_1)/2} b^{d}\sqrt{-a^{x_1} b^{y_1}}  \bmod  c   \eqno{(2.2)} $$
	where $d = h u(b) + (y_2-y_1)/2$, where $h$ is any integer such that $d \ge 0$ (we can take $h$ to be the least such integer greater than or equal to zero so that $h=0$ when $y_2 \ge y_1$).  
	We have
	$$ a^{x_1} a^{x_2-x_1} = a^{x_2} \congruent -b^{y_2} \congruent -b^{y_1} b^{2d}  \bmod c.  \eqno{(2.3)}$$
	Since $a^{x_1} \congruent -b^{y_1} \bmod c$, (2.3) gives 
	$$ a^{x_2-x_1} \congruent b^{2d} \bmod c.  \eqno{(2.4)}  $$
	Since $b$ is prime to $c$, there exists an integer $\delta$ such that  
	$$ a^{(x_2-x_1)/2}  \congruent \delta b^{d} \bmod c, -(c-1)/2 \le \delta \le (c-1)/2.  \eqno{(2.5)} $$
	(2.5) with (2.4) gives 
	$$\delta^2 \congruent 1 \bmod c.  \eqno{(2.6)} $$
Now consider all the solutions $(x_i, y_i, z_i)$ to (1.1), regardless of parity class.  $a^{x_i} \congruent -b^{y_i} \bmod c$ for every $i$, and there is no integer greater than 1 dividing all the $x_i$ and also no integer greater than one dividing all the $y_i$.   So we can construct a linear combination of all the $x_i$, and also a linear combination of all the $y_i$, to obtain
	$$a \congruent \pm b^{t_1} \bmod c, b \equiv \pm a^{t_2} \bmod c  $$
	where $0 \le t_1 < u(b)$, $0 \le t_2 < u(a)$.  So from (2.5) we get
	$$ \delta \equiv \pm g^{r} \bmod c, -(c-1)/2 \le \delta \le (c-1)/2,  \eqno{(2.7)} $$
	where we fix either $g = a$ or $g=b$ and take $0 \le r < u(g)$.  By (2.6), $g^{2r} \congruent 1 \bmod c$.  
	This requires either $r=0$ or $r=u(g)/2$.  Let $A$ be the set of at most four residues modulo $c$ which could be congruent to $\pm g^r \bmod c$ when $g=a$, and let $B$ be the set of at most four residues modulo $c$ which could be congruent to $\pm g^r \bmod c$ when $g=b$. $\delta \in A$ and $\delta \in B$.  Notice that $A$ and $B$ are independent of the choice of $(x_2,y_2, z_2)$, and independent of the choice of parity class.  

Recalling (2.4) and (2.5) we see that (2.2) becomes 
	$$ \gamma(x_2, y_2, z_2) \congruent b^{2d} (a^{x_1} - b^{y_1}) + 2 b^{2d} \delta \sqrt{-a^{x_1} b^{y_1} }  \bmod c. \eqno{(2.8)} $$
Write $\gamma_1 = \gamma(x_1,y_1,z_1)$, $\gamma_2= \gamma(x_2, y_2, z_2)$.   Let $\beta = a^{x_1} - b^{y_1} + 2 \delta \sqrt{- a^{x_1} b^{y_1}}$.  Since $\gcd(b,c)=1$, (2.8) gives $c \mid \beta \conj{\beta}$.  Let $\German{c}_k$ be the unique ideal such that $\German{c}_k \conj{\German{c}_k} \in C_D$ and 
$$\German{c}_k \mid [\beta] =  [a^{x_1} - b^{y_1} + 2 \delta \sqrt{- a^{x_1} b^{y_1}}]. \eqno{(2.9)}$$
   $\German{c}_k$ contains both $\beta$ and $c$, so by (2.8) it contains $\gamma_2$, so that $\German{c}_k \mid [\gamma_2]$, so that the solution $(x_2,y_2,z_2)$ is associated with the ideal factorization $\German{c}_k \conj{\German{c}_k}$.   Since either $r=0$ or $r=u(g)/2$ ($r=u(g)/2$ is possible only when $2 \mid u(g)$), there are at most two possible values for $g^r \bmod c$ one of which must be $1 \bmod c$, so, by (2.7) and (2.9), there are at most two possible choices for the pair $\{ \German{c}_k, \conj{\German{c}_k} \}$ one of which must be $\{ \German{c}_h, \conj{\German{c}_h} \}$ where $\German{c}_h \mid [\gamma_1]$ or $[ \conj{\gamma_1}]$, so that $\German{c}_h \conj{\German{c}_h} $ is the ideal factorization with which the solution $(x_1,y_1,z_1)$ is associated.  If (1.1) has solutions in more than one parity class, then, by Lemma 3, we can assume $g^{u(g)/2} \congruent -1 \bmod c$ so that, by (2.7), $\delta = \pm 1$ and either $\beta = \gamma_1$ or $\beta = \conj{\gamma_1}$, so that $\German{c}_k = \German{c}_h$ or $\German{c}_k = \conj{\German{c}_h}$, so that the solution $(x_2,y_2,z_2)$ is associated with the same ideal factorization as the solution $(x_1,y_1,z_1)$.  
\end{proof}

	Recalling the parenthetical comments immediately following the statement of Theorem 1, we see that Theorem 1 follows from Lemmas 1, 2, and 4, except when $(a,b,c) = (3,10,13)$ or $(10,3,13)$.  If $(a,b,c) = (3,10,13)$, then consideration modulo 13 gives $2 \nmid y$; since $3^2 + 2^5\cdot 5 = 13^2$, there can be no solution with $2 \mid x$, by Theorem 1 of [9]; thus there are exactly two solutions by Theorem 1 of [9].

\section{Cases with exactly two solutions}  %3

We give the following conjecture, which allows $c$ even as well as $c$ odd:

\begin{Conjecture}    % 1 
For integers $a$, $b$, and $c$ all  greater than one with  $\gcd(a,b)  = 1$, there is at most one solution in positive integers $(x,y,z)$ to (1.1) except for the following $(a,b,c)$ or $(b,a,c)$:  $(5, 2, 3)$, $(7, 2, 3)$,  $(3, 2, 11)$,  $ (3, 2, 35)$, $ (3, 2, 259)$,  $(3, 4, 259)$,  $(3, 16, 259)$,  $(5, 2, 133)$,  $(3, 10, 13)$,  $(89, 2, 91)$,  $(91, 2, 8283)$, $(3, 5, 2)$, \allowbreak $(3,13,2)$,  $(3, 13, 4)$,  $(3, 13, 16)$,  $(3, 13, 2200)$, and   $(2^{n} - 1, 2, 2^{n} + 1)$ for any positive integer $n \ge 2$. 
\end{Conjecture}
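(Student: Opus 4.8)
The Conjecture is a sharp classification: it asserts that two solutions occur only for the finitely many listed $(a,b,c)$ and the single family $(2^n-1,2,2^n+1)$, and that three never occur. I would attack it in three stages: (i) a universal ``at most two solutions'' bound valid for every $(a,b,c)$ with $\gcd(a,b)=1$, including even $c$; (ii) an algebraic analysis extracting, from any pair of solutions, a rigid unit equation in an imaginary quadratic order; and (iii) a reduction of the coexistence of two solutions to a finite search, together with a separate treatment of the infinite family.

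\textbf{Stage 1.} For odd $c$, Theorem 1 already supplies the bound, with $z<ab/2$ via \cite{ScSt}. For even $c$, observe that $\gcd(a,b)=1$ and $2\mid c$ force $a$ and $b$ both odd, so the parity-class and ideal-factorization framework of Section 2 transfers to $\ratQ(\sqrt{-D})$ once one accounts for the ramification of $2$ and the extra units present when $D\in\{1,3\}$. The one genuinely new input is the analogue of Lemma 2 in the ramified setting --- at most one solution per parity class per ideal factorization --- which I would obtain by importing the results of \cite{Sc}. With the even-$c$ analogues of Lemmas 1, 3, and 4 in hand, ``at most two solutions'' holds for all $(a,b,c)$.

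\textbf{Stage 2.} Suppose $(x_1,y_1,z_1)$ and $(x_2,y_2,z_2)$ are distinct solutions. By Lemma 4 (and its even-$c$ analogue) they lie in the same parity class and are associated with the same ideal factorization $\German{c}_k\conj{\German{c}_k}$, and the primitivity built into $C_D$ gives $[\gamma_i]=\German{c}_k^{2z_i}$ after possibly replacing $\gamma_2$ by $\conj{\gamma_2}$. Raising to the appropriate powers, $[\gamma_1^{z_2}]=\German{c}_k^{2z_1z_2}=[\gamma_2^{z_1}]$, so $\gamma_1^{z_2}=\zeta\,\gamma_2^{z_1}$ for a root of unity $\zeta$ in the order. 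Since $|\gamma_i|=c^{z_i}$, the moduli already balance, and the whole content of the identity lies in the arguments: writing $\gamma_i/\conj{\gamma_i}=e^{2\mathrm{i}\theta_i}$, the relation becomes $z_2\theta_1-z_1\theta_2\equiv\arg\zeta\pmod{2\pi}$, an exact linear relation among $\log(\gamma_1/\conj{\gamma_1})$, $\log(\gamma_2/\conj{\gamma_2})$, and $\log(-1)$. This rigidity, combined with the size identities $a^{x_i}+b^{y_i}=c^{z_i}$, is the engine of the classification.

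\textbf{Stage 3 and the main obstacle.} The size relations give a small linear form in logarithms (for instance $|z_i\log c-x_i\log a|$ is tiny when $b^{y_i}$ is small against $c^{z_i}$); bounding it below by Baker--Matveev estimates, or directly by the explicit bounds of \cite{ScSt}, together with $z<ab/2$, bounds $z$ and the exponents in terms of $a$ and $b$. The crux is then to bound $a$ and $b$ themselves: one must show that, outside the family $(2^n-1,2,2^n+1)$, the rigid identity of Stage 2 cannot be satisfied once $a$ or $b$ is large. Once $(a,b,c)$ is confined to a finite box, a direct computation produces the displayed list (which mixes odd and even $c$). The family itself is verified directly from $(2^n-1)+2=2^n+1$ and $(2^n-1)^2+2^{n+2}=(2^n+1)^2$, giving the two solutions $(1,1,1)$ and $(2,n+2,2)$; one must also show it is the \emph{only} escaping family, i.e.\ that the alignment in Stage 2 forces $b=2$, $y_1=z_1=1$, and the square identity above. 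This last uniform bounding of $a$ and $b$ is precisely what current lower bounds for linear forms in logarithms do not yield cheaply, and it is why the statement is offered as a conjecture: a complete proof would seem to require either a sharper, fully uniform two-logarithm estimate in Stage 2 or an additional elementary descent exploiting the very constrained shape of $\gamma_1$ and $\gamma_2$.
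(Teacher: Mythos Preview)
The statement you are attempting to prove is labelled a \emph{Conjecture} in the paper, and the paper offers no proof of it; it only verifies that each listed $(a,b,c)$ really has more than one solution and reports a finite computer search. So there is no ``paper's own proof'' to compare against, and your write-up should be read as a heuristic programme rather than a proof.

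That said, your programme has a concrete error that is worth flagging. In Stage~1 you assert that the ``at most two solutions'' bound extends to even $c$ once one accounts for ramification of $2$ and the extra units when $D\in\{1,3\}$. This is false as stated: the triple $(a,b,c)=(3,5,2)$, which appears in the conjectural list, has \emph{three} solutions, namely $3+5=2^3$, $3^3+5=2^5$, and $3+5^3=2^7$. The paper itself points this out in the paragraph following the Conjecture. So the even-$c$ analogue of Lemma~2 cannot go through unchanged; the extra units in $\ratQ(\sqrt{-1})$ and $\ratQ(\sqrt{-3})$ genuinely produce additional solutions rather than being ``easily handled''. Any correct Stage~1 for even $c$ must therefore allow up to three solutions in certain degenerate fields, and your Stage~2, which presupposes exactly two solutions sharing a single parity class and a single ideal factorization, would need to be reworked accordingly.

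Beyond this, your Stage~2 slightly overstates what Lemma~4 gives: it does not force two solutions into the same parity class, only into at most two parity classes, with the single-factorization conclusion available only when both classes are occupied. And your own final paragraph correctly identifies the real obstruction: bounding $a$ and $b$ uniformly, outside the family $(2^n-1,2,2^n+1)$, is exactly what is not currently available, which is why the statement remains a conjecture.
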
 

If $c \ne 2200$ then Theorem 1 of this paper with Theorem 6 of [9] can be used to show that all $(a,b,c)$ in the above list give exactly two solutions in positive integers $x$, $y$, and $z$ except for $(a,b,c) = (3, 5, 2)$, which has three solutions.  If $c=2200$ and $\{ a,b \} = \{ 3, 13 \}$, then, if $z>1$, consideration modulo 16 gives $x \equiv y \bmod 4$ in (1.1), contradicting consideration modulo 5; so $z=1$, giving exactly two solutions.  

A computer search found no other $(a,b,c)$ with $\gcd(a,b)=1$ giving more than one solution in positive integers $x$, $y$, $z$ to (1.1) in the ranges $a<2500$, $b<10000$ with $a^x < 10^{30}$, $b^y < 10^{30}$.  

Except for $(5,2,3)$, $(7,2,3)$, and $(2^n-1,2,2^n+1)$, all the $(a,b,c)$ in the list given in Conjecture 1 can be derived from the first six entries of Conjecture 1.2 of \cite{Be} which deals with the case in which one of $x$ or $y$ is constant.

\end{document}